\documentclass[a4paper,12pt,intlimits,oneside]{amsart}

\usepackage{enumerate}
\usepackage{amsfonts,amsmath}

\usepackage{latexsym,amssymb}

\textwidth14cm \textheight21cm \evensidemargin.2cm
\oddsidemargin.2cm

\addtolength{\headheight}{3.2pt}
\newcommand{\comment}[1]{}

\newcounter{rea}
\setcounter{rea}{0}

\newcounter{rek}
\setcounter{rek}{0}

\newcounter{res}
\setcounter{res}{0}

\begin{document}

\title[On $H^p_w$-boundedness through molecular characterization]{A note on  $H^p_w$-boundedness of Riesz transforms and $\theta$-Calder\'on-Zygmund operators through molecular characterization}         

\author{Luong Dang  KY}    
\address{MAPMO-UMR 6628,
D\'epartement de Math\'ematiques, Universit\'e d'Orleans, 45067
Orl\'eans Cedex 2, France} 
\email{{\tt dangky@math.cnrs.fr}}
\keywords{Muckenhoupt weights, weighted Hardy spaces, atomic decomposition, molecular characterization, Riesz transforms, Calder\'on-Zygmund operators}
\subjclass[2010]{42B20, 42B25, 42B30}

\begin{abstract} Let $0<p\leq 1$ and $w$ in the Muckenhoupt class $A_1$. Recently, by using the weighted atomic decomposition and molecular characterization; Lee, Lin and Yang \cite{LLY} (J. Math. Anal. Appl. 301 (2005), 394--400) established that the Riesz transforms $R_j, j=1, 2,...,n$, are bounded on $H^p_w(\mathbb R^n)$. In this note we extend this to the general case of weight $w$ in the  Muckenhoupt class $A_\infty$ through molecular characterization. One difficulty, which has not been taken care in \cite{LLY}, consists in passing from atoms to all functions in $H^p_w(\mathbb R^n)$. Furthermore, the $H^p_w$-boundedness of $\theta$-Calder\'on-Zygmund operators are also given through molecular characterization and atomic decomposition.
\end{abstract}

\maketitle
\newtheorem{theorem}{Theorem}[section]
\newtheorem{lemma}{Lemma}[section]
\newtheorem{proposition}{Proposition}[section]
\newtheorem{remark}{Remark}[section]
\newtheorem{corollary}{Corollary}[section]
\newtheorem{definition}{Definition}[section]
\newtheorem{example}{Example}[section]
\numberwithin{equation}{section}
\newtheorem{Theorem}{Theorem}[section]
\newtheorem{Lemma}{Lemma}[section]
\newtheorem{Proposition}{Proposition}[section]
\newtheorem{Remark}{Remark}[section]
\newtheorem{Corollary}{Corollary}[section]
\newtheorem{Definition}{Definition}[section]
\newtheorem{Example}{Example}[section]

\section{Introduction}

Calder\'on-Zygmund operators and their generalizations on Euclidean space $\mathbb R^n$ have been extensively studied, see for example \cite{Jo, MC, Ya, QY}. In particular, Yabuta \cite{Ya} introduced certain $\theta$-Calder\'on-Zygmund  operators to facilitate his study of certain classes of pseudo-differential operator. 
\begin{Definition}
Let $\theta$ be a nonnegative nondecreasing function on $(0,\infty)$ satisfying $\int_0^1\frac{\theta(t)}{t}dt<\infty$. A continuous function $K:\mathbb R^n\times \mathbb R^n\setminus\{(x,x):x\in \mathbb R^n\}\to\mathbb C$ is said to be a $\theta$-Calder\'on-Zygmund singular integral kernel if there exists a constant $C>0$ such that
$$|K(x,y)|\leq \frac{C}{|x-y|^n}$$
for all $x\ne y$,
$$ |K(x,y)-K(x',y)|+|K(y,x)-K(y,x')|\leq C\frac{1}{|x-y|^n}\theta\Big(\frac{|x-x'|}{|x-y|}\Big)$$
for all $2|x-x'|\leq |x-y|$.

A linear operator $T:\mathcal S(\mathbb R^n)\to\mathcal S'(\mathbb R^n)$ is said to be a $\theta$-Calder\'on-Zygmund operator if $T$ can be extended to a bounded operator on $L^2(\mathbb R^n)$ and there exists a $\theta$-Calderon-Zygmund singular integral kernel $K$ such that for all $f\in C^\infty_c(\mathbb R^n)$ and all $x\notin$supp $f$, we have
$$Tf(x)=\int_{\mathbb R^n}K(x,y)f(y)dy.$$

\end{Definition}

When $K_j(x,y)= \pi^{-(n+1)/2}\Gamma\Big(\frac{n+1}{2}\Big)\frac{x_j-y_j}{|x-y|^{n+1}}, j=1, 2,...,n$, then they are the classical {\sl Riesz transforms}  denoted by $R_j$. 

It is well-known that the Riesz transforms $R_j, j=1, 2,..., n$, are bounded on unweighted Hardy spaces $H^p(\mathbb R^n)$. There are many different approaches to prove this classical result (see \cite{LLY, Le}).  Recently, by using  the weighted molecular theory (see \cite{LL}) and combined with Garc\'ia-Cuerva's atomic decomposition \cite{Ga} for weighted Hardy spaces $H^p_w(\mathbb R^n)$, the authors in \cite{LLY} established that the Riesz transforms $R_j, j=1, 2,...,n$, are bounded on $H^p_w(\mathbb R^n)$. More precisely, they proved that $\|R_jf\|_{H^p_w}\leq C$ for every $w$-$(p,\infty, ts-1)$-atom where $s,t\in\mathbb N$ satisfy $n/(n+s)< p\leq n/(n+s-1)$ and $((s-1)r_w +n)/(s(r_w-1))$ with $r_w$ is the {\sl critical index of $w$ for the reverse H\"older condition}. Remark that this leaves a gap in the proof. Similar gaps exist in some litteratures, for instance in \cite{LL, QY} when the authors establish $H^p_w$-boundedness of Calder\'on-Zygmund type operators. Indeed, it is now well-known that (see \cite{Bo}) the argument {\sl "the operator $T$ is uniformly bounded in $H^p_w(\mathbb R^n)$ on $w$-$(p,\infty,r)$-atoms, and hence it extends to a bounded operator on $H^p_w(\mathbb R^n)$"} is wrong in general. However,  Meda, Sj\"ogren and Vallarino \cite{MSV} establishes that (in the setting of unweighted Hardy spaces) this is correct if one replaces  $L^\infty$-atoms by $L^q$-atoms with $1<q<\infty$. See also \cite{YZ} for $L^2$-atoms with a different method from \cite{MSV}. Later, the authors in \cite{BLYZ} extended these results to the weighted anisotropic Hardy spaces. More precisely,  it is claimed in \cite{BLYZ} that the operator $T$ can be extended to a bounded operator on $H^p_w(\mathbb R^n)$ if it is uniformly bounded on $w$-$(p,q,r)$-atoms for $q_w< q< \infty, r\geq [n(q_w/p-1)]$ where $q_w$ is the {\sl critical index of $w$}.
 
Motivated by \cite{LLY, LL, QY, Bo, BLYZ}, in this paper, we extend {\sl Theorem 1} in \cite{LLY} to $A_\infty$ weights (see {\sl Theorem 1.1}); {\sl Theorem 4} in \cite{LL} (see {\sl Theorem 1.2}), {\sl Theorem 3} in \cite{QY} (see {\sl Theorem 3.1}) to $\theta$-Calder\'on-Zygmund operators; and fill the gaps of the proofs by using the atomic decomposition and molecular characterization of $H^p_w(\mathbb R^n)$ as in \cite{LLY}.

Throughout the whole paper, $C$ denotes a positive geometric constant which is independent of the main parameters, but may change from line to line.  In $\mathbb R^n$, we denote by $B=B(x,r)$ an open ball with center $x$ and radius $r>0$. For any measurable set $E$, we denote by  $|E|$ its  Lebesgue measure, and by $E^c$ the set $\mathbb R^n\setminus E$.

Let us first  recall some notations, definitions and well-known results.

Let $1\leq p<\infty$. A nonnegative locally integrable function $w$ belongs to the {\sl Muckenhoupt class} $A_p$, say $w\in A_p$, if there exists a positive constant $C$ so that
$$\frac{1}{|B|}\int_B w(x)dx\Big(\frac{1}{|B|}\int_B (w(x))^{-1/(p-1)}dx\Big)^{p-1}\leq C, \quad\mbox{if}\; 1<p<\infty,$$
and 
$$\frac{1}{|B|}\int_B w(x)dx\leq C \mathop{\mbox{ess-inf}}\limits_{x\in B}w(x),\quad{\rm if}\; p=1,$$
for all balls $B$ in $\mathbb R^n$. We say that $w\in A_\infty$ if $w\in A_p$ for some $p\in [1,\infty)$.

It is well known that $w\in A_p$, $1\leq p<\infty$, implies $w\in A_q$ for all $q >p$. Also, if $w\in A_p$, $1<p<\infty$, then $w\in A_q$ for some $q\in [1,p)$. We thus write $q_w := \inf\{p \geq 1: w\in A_p \}$ to denote the critical index of $w$. For a measurable set $E$, we note  $w(E) =\int_E w(x) dx$ its weighted measure.

The following lemma gives a characterization of the class $A_p$, $1\leq p<\infty$. It can be found in \cite{GR}.\\\\
{\bf Lemma A.}
The function $w\in A_p$, $1\leq p<\infty$, if and only if, for all nonnegative functions and all balls $B$,
$$\Big(\frac{1}{|B|}\int_B f(x)dx\Big)^p\leq C\frac{1}{w(B)}\int_B f(x)^p w(x)dx.$$

A close relation to $A_p$ is the reverse H\"older condition. If there exist $r >1$ and a fixed constant $C >0$ such that
$$\Big(\frac{1}{|B|}\int_B w^r(x) dx\Big)^{1/r}\leq C\Big(\frac{1}{|B|}\int_B w(x)dx\Big)\qquad\mbox{for every ball}\; B\subset \mathbb R^n,$$
we say that $w$ satisfies {\sl reverse H\"older condition of order $r$} and write $w\in RH_r$. It is known that
if $w\in RH_r$, $r >1$, then $w\in RH_{r+\varepsilon}$ for some $\varepsilon > 0$. We thus write $r_w := \sup\{r >1: w \in RH_r\}$ to denote the {\sl critical index of $w$ for the reverse H\"older condition}.

The following result provides us the comparison between the Lebesgue measure of a set $E$ and its weighted measure $w(E)$. It also can be found in \cite{GR}.\\\\
{\bf Lemma  B.}
Let $w\in A_p\cap RH_r$, $p\geq 1$ and $r>1$. Then there exist constants $C_1, C_2>0$ such that
$$C_1\Big(\frac{|E|}{|B|}\Big)^{p}\leq \frac{w(E)}{w(B)}\leq C_2\Big(\frac{|E|}{|B|}\Big)^{(r-1)/r},$$
for all cubes $B$ and measurable subsets $E\subset B$.

Given a weight function $w$ on $\mathbb R^n$, as usual we denote by $L^q_w(\mathbb R^n)$ the space of all functions $f$ satisfying $\|f\|_{L^q_w}:=(\int_{\mathbb R^n} |f(x)|^q w(x)dx)^{1/q}<\infty$. When $q=\infty$, $L^\infty_w(\mathbb R^n)$ is  $L^\infty(\mathbb R^n)$ and $\|f\|_{L^\infty_w}=\|f\|_{L^\infty}$. Analogously to the classical Hardy spaces, the {\sl weighted Hardy spaces} $H^p_w(\mathbb R^n), p>0$, can be defined in terms of maximal functions. Namely, let $\phi$ be a function in $\mathcal S(\mathbb R^n)$, the Schwartz space of rapidly decreasing smooth functions, satisfying $\int_{\mathbb R^n}\phi(x)dx=1$. Define
$$\phi_t(x)=t^{-n}\phi(x/t),\quad t>0, x\in\mathbb R^n,$$
and the maximal function $f^*$ by
$$f^*(x)=\sup_{t>0}|f*\phi_t(x)|,\quad x\in \mathbb R^n.$$
Then $H^p_w(\mathbb R^n)$ consists of those tempered distributions $f\in\mathcal S'(\mathbb R^n)$ for which $f^*\in L^p_w(\mathbb R^n)$ with the (quasi-)norm
$$\|f\|_{H^p_w}=\|f^*\|_{L^p_w}.$$

In order to show the $H^p_w$-boundedness of Riesz transforms, we characterize weighted Hardy spaces in terms of atoms and molecules in the following way.\\

{\bf Definition of a weighted atom.} Let $0<p\leq 1\leq q\leq \infty$ and $p\ne q$ such that $w\in A_q$. Let $q_w$ be the critical index of $w$. Set $[\cdot]$ the integer function. For $s\in\mathbb N$ satisfying $s\geq [n(q_w/p-1)]$, a function $a\in L^q_w(\mathbb R^n)$ is called  {\sl $w$-$(p,q,s)$-atom centered at $x_0$} if

(i) supp $a\subset B$ for some ball $B$ centered at $x_0$,

(ii) $\|a\|_{L^q_w}\leq w(B)^{1/q-1/p}$,

(iii) $\int_{\mathbb R^n} a(x)x^\alpha dx=0$ for every multi-index $\alpha$ with $|\alpha|\leq s$.\\
Let $H^{p,q,s}_w(\mathbb R^n)$ denote the space consisting of tempered distributions admitting a decomposition $f=\sum_{j=1}^\infty \lambda_j a_j$ in $\mathcal S'(\mathbb R^n)$, where $a_j$'s are $w$-$(p,q,s)$-atoms and $\sum_{j=1}^\infty |\lambda_j|^p<\infty$. And for every $f\in H^{p,q,s}_w(\mathbb R^n)$, we consider the (quasi-)norm
$$\|f\|_{H^{p,q,s}_w}=\inf\Big\{\Big(\sum_{j=1}^\infty |a_j|^p\Big)^{1/p}: f\mathop{=}\limits^{\mathcal S'}\sum_{j=1}^\infty \lambda_j a_j,\;\; \{a_j\}_{j=1}^\infty\;\mbox{are}\; w\mbox{-}(p,q,s)\mbox{-atoms}\Big \}.$$
Denote by $H^{p,q,s}_{w, \rm fin}(\mathbb R^n)$ the {\sl vector space of all finite linear combinations of $w$-$(p,q,s)$-atoms}, and the  {\sl (quasi-)norm} of $f$ in $H^{p,q,s}_{w, \rm fin}(\mathbb R^n)$ is defined by
$$\|f\|_{H^{p,q,s}_{w, \rm fin}}:=\inf\Big\{\Big(\sum_{j=1}^k |\lambda_j|^p\Big)^{1/p}: f= \sum_{j=1}^k \lambda_j a_j, k\in\mathbb N, \{a_j\}_{j=1}^k \;\mbox{are}\; w\mbox{-}(p,q,s)\mbox{-atoms} \Big\}.$$

We have the following atomic decomposition for $H^p_w(\mathbb R^n)$. It can be found in \cite{Ga} (see also \cite{BLYZ, Ky}).\\\\
{\bf Theorem  A.} If the triplet $(p,q,s)$ satisfies the conditions of $w$-$(p,q,s)$-atoms, then $H^p_w(\mathbb R^n)= H^{p,q,s}_w(\mathbb R^n)$ with equivalent norms.

The molecules corresponding to the atoms mentioned above can be defined as follows.\\

{\bf Definition of a weighted molecule.} For $0<p\leq 1\leq q\leq \infty$ and $p\ne q$, let $w\in A_q$ with critical index $q_w$ and critical index $r_w$ for the reverse H\"older condition. Set $s\geq [n(q_w/p-1)]$, $\varepsilon>\max\{sr_w(r_w-1)^{-1}n^{-1}+ (r_w-1)^{-1}, 1/p-1\}$, $a=1-1/p+\varepsilon$, and $b=1-1/q+\varepsilon$. A  {\sl $w$-$(p,q,s,\varepsilon)$-molecule centered at $x_0$} is a function $M\in L^q_w(\mathbb R^n)$ satisfying

(i) $M. w(B(x_0,\cdot-x_0))^b \in L^q_w(\mathbb R^n)$,

(ii) $\|M\|_{L^q_w}^{a/b}\|M.w(B(x_0,\cdot-x_0))^b\|_{L^q_w}^{1-a/b}\equiv \mathfrak N_w(M)<\infty$,

(iii) $\int_{\mathbb R^n} M(x)x^\alpha dx=0$ for every multi-index $\alpha$ with $|\alpha|\leq s$.

The above quantity $\mathfrak N_w(M)$ is called the  $w$-molecular norm of $M$.

In \cite{LL}, Lee and Lin proved that every weighted molecule belongs to the weighted Hardy space $H^p_w(\mathbb R^n)$, and the embedding is continuous.\\\\
{\bf Theorem B.}  Let $0<p\leq 1\leq q\leq \infty$ and $p\ne q$, $w\in A_q$, and $(p,q,s,\varepsilon)$ be the quadruple in the definition of molecule. Then, every $w$-$(p,q,s,\varepsilon)$-molecule $M$ centered at any point in $\mathbb R^n$ is in $H^p_w(\mathbb R^n)$, and $\|M\|_{H^p_w}\leq C \mathfrak N_w(M)$ where the constant $C$ is independent of the molecule.

Although, in general, one cannot conclude that an operator $T$ is bounded on  $H^p_w(\mathbb R^n)$ by checking that their norms have uniform bound on all of the corresponding $w$-$(p,\infty,s)$-atoms (cf. \cite{Bo}). However, this is correct when dealing with $w$-$(p,q,s)$-atoms with $q_w<q<\infty$. Indeed, we have the following result (see [2, Theorem 7.2]).\\\\
{\bf Theorem C.} Let $0<p\leq 1$, $w\in A_\infty$, $q\in (q_w,\infty)$ and $s\in\mathbb Z$ satisfying $s\geq [n(q_w/p-1)]$. Suppose that $T: H^{p,q,s}_{w,\rm fin}(\mathbb R^n)\to H^p_w(\mathbb R^n)$ is a linear operator satisfying
$$\sup\{\|Ta\|_{H^p_w}: a \;\mbox {is any}\; w{\rm -}(p,q,s){\rm -atom}\}<\infty.$$
Then $T$ can be extended to a bounded linear operator on $H^p_w(\mathbb R^n)$.\\

Our  first  main result, which generalizes {\sl Theorem 1} in \cite{LLY}, is as follows:

\begin{Theorem}\label{Riesz}
Let $0<p\leq 1$ and $w\in A_\infty$. Then, the Riesz transforms are bounded on $H^p_w(\mathbb R^n)$.
\end{Theorem}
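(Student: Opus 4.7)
The goal is to apply Theorem C with $T = R_j$: fixing some $q \in (q_w,\infty)$ and $s \geq [n(q_w/p-1)]$, it suffices to prove $\|R_j a\|_{H^p_w} \leq C$ uniformly for all $w$-$(p,q,s)$-atoms $a$. Via Theorem B, I would reduce this to the claim that for each such $a$ the image $R_j a$ is, up to a uniform constant, a $w$-$(p,q,s,\varepsilon)$-molecule centered at the same point as $a$, for some admissible $\varepsilon$. This route simultaneously extends Theorem~1 of \cite{LLY} from $A_1$ to $A_\infty$ and repairs the gap in \cite{LLY} of passing from atoms to all of $H^p_w$, because the extension is delivered by Theorem C rather than by the (generally false) ``uniform bound on atoms $\Rightarrow$ bounded on $H^p_w$'' principle.

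Let $a$ be a $w$-$(p,q,s)$-atom supported in $B = B(x_0,r)$, and take $R_j a$ as the candidate molecule centered at $x_0$. The size condition is immediate: since $w \in A_\infty$ gives $w \in A_q$ for large enough $q$, $R_j$ is bounded on $L^q_w$ and hence $\|R_j a\|_{L^q_w} \leq C w(B)^{1/q - 1/p}$. For the weighted tail I would split $\mathbb R^n = 2B \cup (2B)^c$. On $2B$ the $L^q_w$ size bound together with $w(B(x_0,|x-x_0|))^b \leq C w(B)^b$ gives what is needed. On $(2B)^c$, subtracting from $K_j(x,\cdot)$ its order-$s$ Taylor polynomial about $x_0$ and using the vanishing moments of $a$ yields the classical pointwise bound
$$|R_j a(x)| \leq C\, r^{n+s+1}\,|x-x_0|^{-(n+s+1)}\,w(B)^{-1/p};$$
dyadic summation via Lemma B and the reverse H\"older property of $w$ then produces the required weighted $L^q_w$-estimate on the tail. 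Combining these, $\mathfrak N_w(R_j a)$ is uniformly controlled.

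The main obstacle is the third molecular condition, namely $\int_{\mathbb R^n} R_j a(x)\, x^\alpha\,dx = 0$ for every $|\alpha| \leq s$. Absolute convergence of these integrals already rests on the tail decay of the previous step; and even with convergence in hand one must identify $\langle R_j a, x^\alpha\rangle$ with a pairing of $a$ against a polynomial of degree at most $s$, which then vanishes by the atomic moment condition. Since $R_j^*(x^\alpha)$ is not classically well defined, the cleanest route is to insert the Taylor representation of $K_j$ used above, reverse the order of integration on $(2B)^c$ (justified by the pointwise decay), and treat the contribution of $2B$ via the $L^q_w$ bound; the remaining polynomial-in-$y$ terms (of degree at most $s$) are annihilated by the moments of $a$. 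Once this step is secured, Theorem B gives $\|R_j a\|_{H^p_w} \leq C$ for every atom, and Theorem C then delivers the extension to a bounded operator on all of $H^p_w(\mathbb R^n)$.
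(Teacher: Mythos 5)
Your overall architecture --- reduce to atoms via Theorem C, then to a molecular estimate via Theorem B, and show that $R_j$ maps atoms to molecules --- is exactly the paper's strategy. However, two of your steps fail as written.

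First, there is a parameter obstruction. You take $a$ to be a $w$-$(p,q,s)$-atom, expand $K_j$ to order $s$, and obtain the decay $|R_ja(x)|\lesssim r^{n+s+1}|x-x_0|^{-(n+s+1)}w(B)^{-1/p}$. But for $R_ja$ to be a $w$-$(p,q,s,\varepsilon)$-molecule you must make $\int_{(2B)^c}|R_ja(x)|^q\,w(B(x_0,|x-x_0|))^{bq}w(x)\,dx$ finite with $b=1-1/q+\varepsilon$ and $\varepsilon>sr_w(r_w-1)^{-1}n^{-1}+(r_w-1)^{-1}$. Since $w(B(x_0,R))$ may grow like $(R/r)^{nq}w(B)$, convergence forces $(n+s+1)q-nq^2b>nq$, i.e.\ $\varepsilon<(s+1)(nq)^{-1}+q^{-1}-1<(s+1)/n$; when $r_w$ is close to $1$ the required lower bound $(r_w-1)^{-1}$ already exceeds this, so no admissible $\varepsilon$ exists. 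The paper avoids this by decomposing $f$ into $w$-$(p,q,\ell)$-atoms with $\ell=t(s+1)-1$ for $t$ large (legitimate by Theorems A and C), so the Taylor expansion has order $\ell$ and the decay $|x-x_0|^{-(n+\ell+1)}$ beats the weight. The atom order and the target molecule order cannot be taken equal.

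Second, and more seriously, your argument for the moment condition $\int_{\mathbb R^n}R_ja(x)x^\alpha\,dx=0$ does not go through. After inserting the Taylor representation and exchanging the order of integration, the inner quantity $\int_{(2B)^c}(\mathrm{remainder})(x,y)\,x^\alpha\,dx$ plus the contribution of $2B$ is \emph{not} a polynomial in $y$ of degree at most $s$, and nothing is annihilated by the moments of $a$. This is precisely the delicate point: the multiplier $m_j(\xi)=-i\xi_j/|\xi|$ is bounded but not differentiable at the origin, so the moment cannot be computed by formal duality against $R_j^*(x^\alpha)$. The paper's proof reduces the weighted atom to a multiple of a classical $(p,2,\ell)$-atom $g$ via Lemma A, checks that $R_jg\cdot(\cdot-x_0)^\alpha\in L^1(\mathbb R^n)$ (using the tail decay), identifies $\int R_jg(x)x^\alpha\,dx$ with $C_\alpha D^\alpha(m_j\hat g)(0)$ evaluated through finite differences, and then uses $\hat g(\xi)=O(|\xi|^{\ell+1})$ together with the mere boundedness of $m_j$ to show the difference quotients tend to $0$ for $|\alpha|\leq s\leq\ell$. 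Some quantitative argument of this Fourier-analytic type is indispensable here; your outline is missing it.
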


For the next result, we need the notion $T^*1=0$.
\begin{Definition}
Let $T$ be a $\theta$-Calder\'on-Zygmund operator. We say that $T^*1=0$ if $\int_{\mathbb R^n}Tf(x)dx=0$ for all $f\in L^q(\mathbb R^n), 1< q\leq \infty$, with compact support and $\int_{\mathbb R^n}f(x)dx=0$.
\end{Definition}

We now can give the $H^p_w$-boundedness of $\theta$-Calder\'on-Zygmund type operators, which generalizes {\sl Theorem 4} in \cite{LL} by taking $q=1$ and $\theta(t)=t^\delta$, as follows:

\begin{Theorem}\label{CZO molecule}
Given $\delta\in (0,1]$, $n/(n+\delta)<p\leq 1$, and $w\in A_q\cap RH_r$ with $1\leq q<p(n+\delta)/n, (n+\delta)/(n+\delta- nq)<r$. Let  $\theta$ be a nonnegative nondecreasing function on $(0,\infty)$  with $\int_0^1\frac{\theta(t)}{t^{1+\delta}}dt<\infty$, and  $T$ be a $\theta$-Calder\'on-Zygmund operator satisfying $T^*1=0$. Then $T$ is bounded on $H^p_w(\mathbb R^n)$.
\end{Theorem}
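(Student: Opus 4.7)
The plan is to apply Theorem~C, reducing the statement to a uniform bound $\|Ta\|_{H^p_w}\le C$ over all $w$-$(p,q_0,s)$-atoms $a$, for suitable parameters. The hypothesis $q<p(n+\delta)/n$ together with $q_w\le q$ gives $n(q_w/p-1)\le n(q/p-1)<\delta\le 1$, so I would take $s=[n(q_w/p-1)]=0$. Using openness of $A_\infty$ on the right, I would fix any $q_0\in(q_w,\infty)$ with $q_0\ge q$; then $w\in A_{q_0}$, and the standard weighted theory of $\theta$-Calder\'on--Zygmund operators (which needs only $\int_0^1\theta(t)/t\,dt<\infty$) yields $T:L^{q_0}_w\to L^{q_0}_w$ boundedly.

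For a fixed $w$-$(p,q_0,0)$-atom $a$ supported in $B=B(x_0,r_0)$, the strategy is to exhibit $Ta$ as a constant multiple of a $w$-$(p,q_0,0,\varepsilon)$-molecule centered at $x_0$, and then apply Theorem~B. Three ingredients will be needed. First, $\int_{\bR^n}Ta(x)\,dx=0$ follows directly from $T^*1=0$, since $a\in L^{q_0}$ has compact support and mean zero. Second, $\|Ta\|_{L^{q_0}_w}\le Cw(B)^{1/q_0-1/p}$ comes from the $L^{q_0}_w$-boundedness of $T$ and the atomic normalization. Third, for $x\notin 2B$ the kernel smoothness combined with the cancellation of $a$ yields
$$|Ta(x)|\le C\,\frac{\theta(r_0/|x-x_0|)}{|x-x_0|^n}\,\|a\|_{L^1(B)}\le C\,\frac{r_0^n\,\theta(r_0/|x-x_0|)}{|x-x_0|^n}\,w(B)^{-1/p},$$
where the bound $\|a\|_{L^1}\le C|B|w(B)^{-1/p}$ follows from H\"older's inequality and the $A_{q_0}$-condition.

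The delicate step will be controlling $\|Ta\cdot w(B(x_0,|\cdot-x_0|))^b\|_{L^{q_0}_w}$. On $2B$ a trivial estimate using doubling of $w$ and the $L^{q_0}_w$-bound on $Ta$ suffices. On $\bR^n\setminus 2B=\bigcup_{k\ge 1}E_k$ with $E_k=2^{k+1}B\setminus 2^kB$, the decay above together with the $A_q$-estimate $w(2^{k+1}B)\le C 2^{knq}w(B)$ and the identity $b+1/q_0=1+\varepsilon$ will yield
$$\|Ta\cdot w(B(x_0,|\cdot-x_0|))^b\|_{L^{q_0}_w(E_k)}\le C\,\theta(2^{-k})\,2^{kn[q(1+\varepsilon)-1]}\,w(B)^{b+1/q_0-1/p}.$$
Summability in $k$ demands $n[q(1+\varepsilon)-1]<\delta$, i.e.\ $\varepsilon<(n+\delta-nq)/(nq)$, while admissibility in Theorem~B requires $\varepsilon>\max\{1/(r_w-1),\,1/p-1\}$. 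The hypotheses $q<p(n+\delta)/n$ and $r>(n+\delta)/(n+\delta-nq)$ (together with $r_w\ge r$) are precisely what make the resulting interval for $\varepsilon$ nonempty, and the integrability $\int_0^1\theta(t)/t^{1+\delta}\,dt<\infty$, equivalent to $\sum_k\theta(2^{-k})2^{k\delta}<\infty$, then forces the series to converge. The main obstacle is this final dyadic balancing: tracking the precise exponent of $2^{kn}$ produced by the competition between the decay of $|Ta|$ and the growth of $w(B(x_0,|\cdot-x_0|))^b$, and verifying that both numerical hypotheses on $q$ and $r$ are indispensable for the two-sided constraint on $\varepsilon$ to be consistent. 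Once $\mathfrak N_w(Ta)\le C$ is established, Theorem~B supplies the uniform atomic bound and Theorem~C concludes.
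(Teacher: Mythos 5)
Your proposal follows essentially the same route as the paper: reduce via Theorem~C to $w$-$(p,q,0)$-atoms, show $Ta$ is a $w$-$(p,q,0,\varepsilon)$-molecule by combining the $L^q_w$-bound, the kernel-smoothness decay off $2B$, and a dyadic summation whose convergence pins $\varepsilon$ into exactly the window $\max\{1/(r_w-1),1/p-1\}<\varepsilon<(n+\delta-nq)/(nq)$, then invoke Theorem~B. The only imprecision is the claim that $\int_{\bR^n}Ta\,dx=0$ ``directly'' because $a\in L^{q_0}$: a $w$-$(p,q_0,0)$-atom lies in $L^{q_0}_w$, not $L^{q_0}$, so one must first pass to unweighted integrability (H\"older and $w\in A_r$ for some $r<q_0$ give $a\in L^{q_0/r}$), which is precisely the content of the paper's Lemma~3.1.
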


\section{Proof of Theorem \ref{Riesz}}

In order to prove the main theorems, we need the following lemma (see [6, page 412]).\\\\
{\bf Lemma C.} Let $w\in A_r, r>1$. Then there exists a constant $C>0$ such that
$$\int_{B^c}\frac{1}{|x-x_0|^{nr}}w(x)dx\leq C \frac{1}{\sigma^{nr}}w(B)$$
for all balls $B=B(x_0,\sigma)$ in $\mathbb R^n$.

\begin{proof}[Proof of Theorem \ref{Riesz}]
 For $q=2(q_w+1)\in (q_w,\infty)$, then $s:=[n(q/p-1)]\geq [n(q_w/p-1)]$. We now choose (and fix) a positive number $\varepsilon$ satisfying
\begin{equation}\label{molecule}
\max\{sr_w(r_w-1)^{-1}n^{-1}+(r_w-1)^{-1},q/p-1\}<\varepsilon< t(s+1)(nq)^{-1}+q^{-1}-1,
\end{equation}
for some $t\in \mathbb N, t\geq 1$ and $\max\{sr_w(r_w-1)^{-1}n^{-1}+(r_w-1)^{-1},q/p-1\}< t(s+1)(nq)^{-1}+q^{-1}-1$.

Clearly, $\ell:=t(s+1)-1\geq s\geq [n(q_w/p-1)]$. Hence, by Theorem B and Theorem C, it is sufficient to show that for every $w$-$(p,q,\ell)$-atom $f$ centered at $x_0$ and supported in ball $B=B(x_0,\sigma)$, the Riesz transforms $R_j f=K_j*f$, $j=1,2,...,n,$ are $w$-$(p,q,s,\varepsilon)$-molecules with the norm $\mathfrak N_w(R_jf)\leq C$.

Indeed, as $w\in A_q$ by $q=2(q_w+1)\in (q_w,\infty)$. It follows from $L^q_w$-boundedness of Riesz transforms that
\begin{equation}\label{Riesz transforms 1}
\|R_jf\|_{L^q_w}\leq \|R_j\|_{L^q_w\to L^q_w}\|f\|_{L^q_w}\leq C w(B)^{1/q-1/p}.
\end{equation}

To estimate $\|R_jf. w(B(x_0,|\cdot-x_0|))^{b}\|_{L^q_w}$ where $b=1-1/q+\varepsilon$, we write
\begin{eqnarray*}
\|R_jf. w(B(x_0,\cdot-x_0))^{b}\|_{L^q_w}^q
&=&
\int_{|x-x_0|\leq 2\sqrt n\sigma}|R_jf(x)|^q w(B(x_0,|x-x_0|))^{bq}w(x)dx+\\
&&
+\int_{|x-x_0|> 2\sqrt n\sigma}|R_jf(x)|^q w(B(x_0,|x-x_0|))^{bq}w(x)dx\\
&=& I+II.
\end{eqnarray*}
 By Lemma B, we have the following estimate,
\begin{eqnarray*}
I
&=&
\int_{|x-x_0|\leq 2\sqrt n\sigma}|R_jf(x)|^q w(B(x_0,|x-x_0|))^{bq}w(x)dx\\
&\leq&
w(B(x_0, 2\sqrt n\sigma))^{bq}\int_{|x-x_0|\leq 2\sqrt n\sigma}|R_jf(x)|^q w(x)dx\\
&\leq&
C w(B)^{bq} \|R_j\|^q_{L^q_w\to L^q_w}\|f\|^q_{L^q_w}\leq C w(B)^{(b+1/q-1/p)q}.
\end{eqnarray*}
To estimate II, as $f$ is $w$-$(p,q,\ell)$-atom, by the Taylor's fomular and Lemma A, we get
\begin{eqnarray*}
|K_j*f(x)|
&=&
\Big|\int_{|y-x_0|\leq \sigma}\Big(K_j(x-y)-\sum_{|\alpha|\leq \ell}\frac{1}{\alpha!}D^\alpha K_j(x-x_0)(x_0-y)^\alpha\Big)f(y)dy\Big|\\
&\leq&
C\int_{|y-x_0|\leq \sigma}\frac{\sigma^{\ell+1}}{|x-x_0|^{n+\ell+1}}|f(y)|dy\\
&\leq&
C\frac{\sigma^{n+\ell+1}}{|x-x_0|^{n+\ell+1}}w(B)^{-1/q}\|f\|_{L^q_w},
\end{eqnarray*}
for all $x\in (B(x_0, 2\sqrt n\sigma))^c$. As $b=1-1/q+\varepsilon$, it follows from (\ref{molecule}) that $(n+\ell+1)q - q^2nb > nq$. Therefore, by combining the above inequality, Lemma B and Lemma C, we obtain
\begin{eqnarray*}
II
&=&
\int_{|x-x_0|> 2\sqrt n\sigma}|R_jf(x)|^q w(B(x_0,|x-x_0|))^{bq}w(x)dx\\
&\leq&
C \sigma^{(n+\ell+1)q}w(B)^{-1}\|f\|_{L^q_w}^q\int_{|x-x_0|>2\sqrt n\sigma}\frac{1}{|x-x_0|^{(n+\ell+1)q}}w(B(x_0,|x-x_0|))^{bq}w(x)dx\\
&\leq&
C \sigma^{(n+\ell+1)q-q^2nb} w(B)^{(b-1/p)q}\int_{|x-x_0|>2\sqrt n\sigma}\frac{1}{|x-x_0|^{(n+\ell+1)q-q^2nb}}w(x)dx\\
&\leq&
C w(B)^{(b+1/q-1/p)q}.
\end{eqnarray*}

 Thus,
\begin{equation}\label{Riesz transforms 2}
\|R_jf. w(B(x_0,|\cdot-x_0|))^{b}\|_{L^q_w}=(I+II)^{1/q}\leq C w(B)^{b+1/q-1/p}. 
\end{equation}

Remark that $a=1-1/p +\varepsilon$. Combining (\ref{Riesz transforms 1}) and (\ref{Riesz transforms 2}), we obtain 
$$\mathfrak N_w(R_jf)\leq C w(B)^{(1/q-1/p)a/b}  w(B)^{(b+1/q-1/p)(1-a/b)}\leq C.$$

The proof will be concluded if we establish the vanishing moment conditions of $R_jf$. One first consider the following lemma.\\
$\bf{Lemma.}$ For every classical atom $(p,2,\ell)$-atom $g$ centered at $x_0$, we have
$$\int_{\mathbb R^n}R_j g(x)x^\alpha dx=0\quad\mbox{for}\;\; 0\leq |\alpha|\leq s, 1\leq j\leq n.$$
{\sl Proof of the Lemma.} Since $b=1-1/q+\varepsilon<(\ell+1)(nq)^{-1}<(\ell+1)n^{-1}$, we obtain $2(n+\ell+1)-2nb>n$. It is similar to the previous argument, we also obtain that $R_jg$ and $R_j g. |\cdot-x_0|^{nb}$ belong to $L^2(\mathbb R^n)$. Now, we establish that $R_jg.(\cdot-x_0)^\alpha\in L^1(\mathbb R^n)$ for every multi-index $\alpha$ with $|\alpha|\leq s$. Indeed, since $\varepsilon> q/p-1$  by (\ref{molecule}), implies that $2(s-nb)<(s-nb)q'<-n$ by $q=2(q_w+1)>2$, where $1/q+1/q'=1$. We use Schwartz inequality to get
\begin{eqnarray*}
\int_{B(x_0,1)^c}|R_j g(x)(x-x_0)^\alpha|dx
&\leq&
\int_{B(x_0,1)^c}|R_jg(x)||x-x_0|^{s}dx\\
&\leq&
\Big(\int_{B(x_0,1)^c}|R_j g(x)|^2|x-x_0|^{2nb}dx\Big)^{1/2}\Big(\int_{B(x_0,1)^c}|x-x_0|^{2(s-nb)}dx\Big)^{1/2}\\
&\leq&
C \|R_j g.|\cdot-x_0|^{nb}\|_{L^2}<\infty,
\end{eqnarray*}
and
$$\int_{B(x_0,1)}|R_j g(x)(x-x_0)^\alpha|dx\leq |B(x_0,1)|^{1/2}\Big(\int_{B(x_0,1)}|R_j g(x)|^2dx\Big)^{1/2}<\infty.$$

Thus, $R_j g.(\cdot-x_0)^\alpha\in L^1(\mathbb  R^n)$ for any $|\alpha|\leq s$. Deduce that $R_jg(x)x^\alpha \in L^1(\mathbb R^n)$ for any $|\alpha|\leq s$. Therefore,
$$(R_j g(x)x^\alpha)\hat(\xi)=C_\alpha. D^\alpha\widehat{(R_jg)}(\xi)$$
is continuous, with $|C_\alpha|\leq C_s$ ($C_s$ depends only on $s$) for any $|\alpha|\leq s$, where $\hat h$ is used to denote the fourier transform of $h$. Consequently,
$$\int_{\mathbb R^n}R_j g(x)x^\alpha dx= C_\alpha. D^\alpha\widehat{(R_jg)}(0)=C_\alpha. D^\alpha(m_j\hat g)(0),$$
where $m_j(x)=-ix_j/|x|$. Moreover, since $g$ is a classical $(p,2,\ell)$-atom, it follows from [17, Lemma 9.1] that $\hat g$ is $\ell$th order differentiable and $\hat g(\xi)=O(|\xi|^{\ell+1})$ as $\xi\to 0$. We write $e_j$ to be the $j$th standard basis vector of $\mathbb R^n$, $\alpha=(\alpha_1,...,\alpha_n)$ a multi-index of nonnegative integers $\alpha_j$, $\Delta_{he_j}\phi(x)=\phi(x)-\phi(x-he_j)$, $\Delta^{\alpha_j}_{he_j}\phi(x)=\Delta_{he_j}^{\alpha_j-1}\phi(x)-\Delta_{he_j}^{\alpha_j-1}\phi(x-he_j)$ for $\alpha_j\geq 2$, $\Delta^0_{he_j}\phi(x)=\phi(x)$, and $\Delta^\alpha_{h}=\Delta^{\alpha_1}_{he_1}...\Delta^{\alpha_n}_{he_n}$. Then, the boundedness of $m_j$, and $|C_\alpha|\leq C_s$ for $|\alpha|\leq s$, implies
\begin{eqnarray*}
\Big|\int_{\mathbb R^n}R_j g(x)x^\alpha dx\Big|
&=&
|C_\alpha|\Big|\lim\limits_{h\to 0}|h|^{-|\alpha|}\Delta^\alpha_h(m_j\hat g)(0)\Big|\\
&\leq&
C \lim\limits_{h\to 0}|h|^{\ell+1-|\alpha|}=0,
\end{eqnarray*}
for $|\alpha|\leq s$ by $s\leq \ell$. Thus, for any $j=1,2,...,n$, and $|\alpha|\leq s$,
$$\int_{\mathbb R^n}R_j g(x)x^\alpha dx=0.$$
This complete the proof of the lemma.

Let us come back to the proof of  Theorem \ref{Riesz}. As $q/2= q_w+1> q_w$, by Lemma A,
$$\Big(\frac{1}{|B|}\int_B |f(x)|^2dx\Big)^{q/2}\leq C \frac{1}{w(B)}\int_B |f(x)|^q w(x)dx.$$

Therefore, $g:=C^{-1/q}|B|^{-1/p}w(B)^{1/p}f$ is a classical $(p,2,\ell)$-atom since $f$ is $w$-$(p,q,\ell)$-atom associated with ball $B$. Consequently, by the above lemma,
$$\int_{\mathbb R^n}R_j f(x)x^\alpha dx= C^{1/q}|B|^{1/p}w(B)^{-1/p}\int_{\mathbb R^n}R_j g(x)x^\alpha dx=0$$
for all $ j=1,2,...,n$ and $|\alpha|\leq s$. Thus, the theorem  is proved. 
\end{proof}

Following a similar but easier argument, we also have the following $H^p_w$-boundedness of Hilbert transform. We leave details to readers.

\begin{Theorem}
Let $0<p\leq 1$ and $w\in A_\infty$. Then, the Hilbert transform is bounded on $H^p_w(\mathbb R)$.
\end{Theorem}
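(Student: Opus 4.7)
The plan is to mimic the proof of Theorem \ref{Riesz} verbatim, specializing to dimension $n=1$ and replacing each $R_j$ by the Hilbert transform $H$; by Theorem B and Theorem C it suffices to show that for every $w$-$(p,q,\ell)$-atom $f$ centered at $x_0$ and supported in a ball $B=B(x_0,\sigma)\subset\mathbb R$, the function $Hf$ is a $w$-$(p,q,s,\varepsilon)$-molecule with $\mathfrak N_w(Hf)\leq C$. The parameters are chosen exactly as before: $q=2(q_w+1)$, $s:=[q/p-1]$, then $\varepsilon$ and $\ell=t(s+1)-1$ satisfying (\ref{molecule}).

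For the size estimates, the $L^q_w$-boundedness of $H$ (valid since $w\in A_q$) gives $\|Hf\|_{L^q_w}\le Cw(B)^{1/q-1/p}$, and the decomposition
\[
\|Hf\cdot w(B(x_0,|\cdot-x_0|))^b\|_{L^q_w}^q = I+II
\]
over $\{|x-x_0|\le 2\sigma\}$ and $\{|x-x_0|>2\sigma\}$ is treated exactly as in Theorem \ref{Riesz}: $I$ is handled via Lemma B combined with $L^q_w$-boundedness of $H$. For $II$, one uses the Taylor expansion of the kernel $K(x-y)=\frac{1}{\pi(x-y)}$ around $y=x_0$ together with the vanishing moments of $f$ up to order $\ell$, yielding
\[
|Hf(x)|\le C\frac{\sigma^{1+\ell+1}}{|x-x_0|^{1+\ell+1}}w(B)^{-1/q}\|f\|_{L^q_w}\quad\text{for }|x-x_0|>2\sigma,
\]
and then one invokes Lemma B and Lemma C (with $n=1$). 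The choice of $\varepsilon$ in (\ref{molecule}) guarantees that the exponent $(1+\ell+1)q-q^2 b$ exceeds $1$, so Lemma C is applicable and $II\lesssim w(B)^{(b+1/q-1/p)q}$. This gives $\mathfrak N_w(Hf)\le C$.

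The vanishing moments $\int_{\mathbb R}Hf(x)x^\alpha\,dx=0$ for $|\alpha|\le s$ are established via the same device as in Theorem \ref{Riesz}. First, since $q/2=q_w+1>q_w$, Lemma A turns $f$ into a classical $(p,2,\ell)$-atom $g$ up to a multiplicative constant. On the Fourier side one now has the multiplier $\widehat{Hg}(\xi)=-i\,\mathrm{sgn}(\xi)\hat g(\xi)$ in place of $m_j(\xi)=-i\xi_j/|\xi|$; this multiplier is again bounded, which is all the argument requires. As in the proof of the Lemma within Theorem \ref{Riesz}, one checks that $Hg\cdot(\cdot-x_0)^\alpha\in L^1(\mathbb R)$ via Schwarz's inequality (splitting into $B(x_0,1)$ and its complement and using that $2(s-b)<-1$ by the condition $\varepsilon>q/p-1$), so that $D^\alpha\widehat{Hg}$ is continuous and $\int Hg\,x^\alpha\,dx=C_\alpha D^\alpha(-i\,\mathrm{sgn}(\xi)\hat g(\xi))(0)$. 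Finally, since $g$ is a classical atom, $\hat g(\xi)=O(|\xi|^{\ell+1})$ at the origin by [17, Lemma 9.1], and the finite-difference computation from Theorem \ref{Riesz} gives $0$ because $|\alpha|\le s\le\ell$.

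The only mild subtlety compared with the $n$-dimensional case is that the symbol of $H$, namely $-i\,\mathrm{sgn}(\xi)$, is discontinuous at the origin; however, because $\hat g$ vanishes to order $\ell+1\ge s+1$ at $0$, the product $\mathrm{sgn}(\xi)\hat g(\xi)$ and all of its finite differences up to order $s$ still vanish to the required order, so the discontinuity at the origin is harmless. This is the one place the reader needs to exercise care; everything else is a direct, one-dimensional transcription of the argument already written out.
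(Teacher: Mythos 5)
Your proposal is exactly the argument the paper intends: the paper proves this theorem only by remarking that it follows from ``a similar but easier argument'' to that of Theorem \ref{Riesz}, and your one-dimensional transcription (size estimates via Lemmas B and C, vanishing moments via the Fourier multiplier $-i\,\mathrm{sgn}(\xi)$ and the order of vanishing of $\hat g$ at the origin) is that argument. One small correction: to apply Lemma C in the estimate of $II$ you need the exponent $(1+\ell+1)q-q^2b$ to exceed $q$ (so that $w\in A_q\subset A_r$ with $r$ equal to that exponent), not merely to exceed $1$; this is precisely what condition (\ref{molecule}) delivers with $n=1$, so the proof stands as written.
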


\section{Proof of theorem \ref{CZO molecule}}

We first consider the following lemma
\begin{Lemma}\label{T1}
Let $p\in(0,1], w\in A_q, 1<q<\infty$, and $T$ be a $\theta$-Calder\'on-Zygmund  operator satisfying $T^*1=0$. Then, $\int_{\mathbb R^n}Tf(x)dx=0$ for all $w$-$(p,q,0)$-atoms $f$.
\end{Lemma}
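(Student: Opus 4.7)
The statement amounts to verifying that a $w$-$(p,q,0)$-atom $f$ fits into the class of functions on which the hypothesis $T^*1 = 0$ is declared to be valid. Looking back at the definition, this means I must check three conditions on $f$: (i) compact support; (ii) $\int f\,dx = 0$; and (iii) membership in $L^{\tilde q}(\mathbb R^n)$ for some $\tilde q \in (1,\infty]$. Conditions (i) and (ii) are built into the definition of a $w$-$(p,q,0)$-atom ($\mathrm{supp}\,f\subset B$ and the vanishing moment condition with $s=0$). The real work is therefore to extract some classical integrability $\tilde q > 1$ from the information $\|f\|_{L^q_w} \le w(B)^{1/q-1/p}$, using only $w \in A_q$.

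The key tool will be the self-improvement property already recorded in the preliminaries: since $w \in A_q$ with $1 < q < \infty$, there exists $\eta \in (0,q-1)$ such that $w \in A_{q-\eta}$. In particular, $w^{-1/(q-\eta-1)} \in L^1_{\mathrm{loc}}(\mathbb R^n)$. I will then split $|f|^{\tilde q} = \bigl(|f|^{\tilde q} w^{\tilde q/q}\bigr)\cdot w^{-\tilde q/q}$ and apply H\"older's inequality with conjugate exponents $q/\tilde q$ and $q/(q-\tilde q)$, which yields
\[
\int_B |f(x)|^{\tilde q}\,dx \le \|f\|_{L^q_w}^{\tilde q}\Bigl(\int_B w(x)^{-\tilde q/(q-\tilde q)}\,dx\Bigr)^{(q-\tilde q)/q}.
\]
Choosing $\tilde q := q/(q-\eta) \in (1,q)$, the exponent on the inverse weight becomes exactly $-1/(q-\eta-1)$, so the right-hand side is finite by the previous paragraph. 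Because $\mathrm{supp}\,f\subset B$, this upgrades to $f \in L^{\tilde q}(\mathbb R^n)$ with $\tilde q > 1$.

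With all three prerequisites for the definition of $T^*1 = 0$ verified, I can invoke the hypothesis directly: $\int_{\mathbb R^n} Tf(x)\,dx = 0$, which is exactly the conclusion of the lemma.

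The only genuine obstacle is the integrability upgrade in the middle step; the rest is a bookkeeping check against the two definitions. It is worth noting why the naive estimate fails: a straight H\"older computation with exponents matched to the $A_q$ condition itself only gives $f \in L^1(B)$, which is \emph{not} enough because the hypothesis $T^*1 = 0$ is stated for $L^{\tilde q}$ with $\tilde q > 1$ strictly. The openness of $A_q$ is precisely what bridges this gap, so the place where care is required is to make sure the parameter $\eta$ is chosen before $\tilde q$ and that $\tilde q > 1$ is strict.
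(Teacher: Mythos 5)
Your proof is correct and follows essentially the same route as the paper: both exploit the openness of the Muckenhoupt classes to find $r=q-\eta\in(1,q)$ with $w\in A_r$, deduce that the atom lies in the unweighted space $L^{q/r}(\mathbb R^n)$ with $q/r>1$, and then invoke the definition of $T^*1=0$. The only cosmetic difference is that the paper cites Lemma~A for the integrability upgrade, whereas you reprove that instance of it directly via H\"older's inequality with the $A_{q-\eta}$ condition.
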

\begin{proof}[Proof of Lemma \ref{T1}]
 Let $f$ be an arbitrary $w$-$(p,q,0)$-atom associated with ball $B$. It is well-known that there exists $1<r<q$ such that $w\in A_r$. Therefore, it follows from Lemma A that
$$\int_B |f(x)|^{q/r} dx\leq C |B|w(B)^{1/r}\|f\|_{L^q_w}^{q/r}<\infty.$$
We deduce that $f$ is a multiple of  classical $(p,q/r, 0)$-atom, and thus the condition $T^*1=0$ implies $\int_{\mathbb R^n}Tf(x)dx=0$. 
\end{proof}

\begin{proof}
[Proof of Theorem \ref{CZO molecule}]
 Because of the hypothesis, without loss of generality we can assume $q>1$. Futhermore, it is clear that $[n(q_w/p-1)]=0$, and there exists a positive constant $\varepsilon$ such that
\begin{equation}\label{tri1}
\max\Big\{\frac{1}{r_w-1}, \frac{1}{p}-1\Big\}< \varepsilon< \frac{n+\delta}{nq}-1.
\end{equation}

Similarly to the arguments in Theorem \ref{Riesz}, it is sufficient to show that, for every $w$-$(p,q, 0)$-atom $f$ centered at $x_0$ and supported in ball $B=B(x_0,\sigma)$, $Tf$ is a $w$-$(p,q,0,\varepsilon)$-molecule with the norm $\mathfrak N_w(Tf)\leq C$. One first observe that $\int_{\mathbb R^n}Tf(x)dx=0$ by Lemma \ref{T1}, and
$$\sum_{k=0}^\infty\theta(2^{-k})2^{knbq}<\infty,$$
where $b=1-1/q+\varepsilon$, by $\int_0^1 \frac{\theta(t)}{t^{1+\delta}}dt<\infty$ and (\ref{tri1}). We deduce that
\begin{equation}\label{tri2}
\sum_{k=0}^\infty\Big(\theta(2^{-k})2^{knbq}\Big)^q<\infty.
\end{equation}

As $w\subset A_q$, $1<q<\infty$, it follows from [18, Theorem 2.4] that
\begin{equation}\label{tri3}
\|Tf\|_{L^q_w}\leq C \|f\|_{L^q_w}\leq C w(B)^{1/q-1/p}.
\end{equation}

To estimate $\|Tf. w(B(x_0,|\cdot-x_0|))^{b}\|_{L^q_w}$, we write
\begin{eqnarray*}
\|Tf. w(B(x_0,\cdot-x_0))^{b}\|_{L^q_w}^q
&=&
\int_{|x-x_0|\leq 2\sigma}|Tf(x)|^q w(B(x_0,|x-x_0|))^{bq}w(x)dx+\\
&+&
\int_{|x-x_0|> 2\sigma}|Tf(x)|^q w(B(x_0,|x-x_0|))^{bq}w(x)dx= I+II.
\end{eqnarray*}
 By Lemma B, we have the following estimate,
\begin{eqnarray*}
I
&=&
\int_{|x-x_0|\leq 2\sigma}|Tf(x)|^q w(B(x_0,|x-x_0|))^{bq}w(x)dx\\
&\leq&
w(B(x_0, 2\sigma))^{bq}\int_{|x-x_0|\leq 2\sigma}|Tf(x)|^q w(x)dx\\
&\leq&
C w(B)^{bq} \|f\|^q_{L^q_w}\leq C w(B)^{(b+1/q-1/p)q}.
\end{eqnarray*}

To estimate $II$, since $f$ is of  mean zero, by Lemma A, we have
\begin{eqnarray*}
|Tf(x)|
&=&
\Big|\int_{|y-x_0|\leq \sigma}(K(x,y)- K(x, x_0))f(y)dy\Big|\\
&\leq&
C \int_{|y-x_0|\leq \sigma} \frac{1}{|x-x_0|^n}\theta\Big(\frac{|y-x_0|}{|x-x_0|}\Big)|f(y)|dy\\
&\leq&
C \frac{\sigma^n}{|x-x_0|^n} \theta\Big(\frac{\sigma}{|x-x_0|}\Big)w(B)^{-1/q}\|f\|_{L^q_w},
\end{eqnarray*}
for all $x\in (B(x_0, 2\sigma))^c$.  Therefore, by combining the above inequality, Lemma B and (\ref{tri2}), we obtain
\begin{eqnarray*}
II
&=&
\int_{|x-x_0|> 2\sigma}|Tf(x)|^q w(B(x_0,|x-x_0|))^{bq}w(x)dx\\
&\leq&
C w(B)^{-1}\|f\|_{L^q_w}^q\int_{|x-x_0|>2\sigma}\frac{\sigma^{nq}}{|x-x_0|^{nq}}\left(\theta\Big(\frac{\sigma}{|x-x_0|}\Big)\right)^q w(B(x_0,|x-x_0|))^{bq}w(x)dx\\
&\leq&
C  w(B)^{-q/p}\sum_{k=1}^\infty \int_{2^k\sigma<|x-x_0|\leq 2^{k+1}\sigma}\frac{\sigma^{nq}}{|x-x_0|^{nq}}\left(\theta\Big(\frac{\sigma}{|x-x_0|}\Big)\right)^q w(B(x_0,|x-x_0|))^{bq}w(x)dx\\
&\leq&
C w(B)^{(b+1/q-1/p)q}\sum_{k=0}^\infty\Big(\theta(2^{-k})2^{knbq}\Big)^q\leq Cw(B)^{(b+1/q-1/p)q}.
\end{eqnarray*}

Thus,
\begin{equation}\label{tri4}
\|Tf. w(B(x_0,|\cdot-x_0|))^{b}\|_{L^q_w}=(I+II)^{1/q}\leq C w(B)^{b+1/q-1/p}. 
\end{equation}

Remark that $a=1-1/p +\varepsilon$. Combining (\ref{tri3}) and (\ref{tri4}), we obtain 
$$\mathfrak N_w(Tf)\leq C w(B)^{(1/q-1/p)a/b}  w(B)^{(b+1/q-1/p)(1-a/b)}\leq C.$$
This finishes the proof.
\end{proof}

It is well-known that the molecular theory of (unweighted) Hardy spaces of Taibleson and Weiss \cite{TW} is one of useful tools to establish boundedness of operators in Hardy spaces (cf. \cite{TW, Lu}). In the setting of Muckenhoupt weight, this theory has been considered by the authors in \cite{LL}, since then, they have been well used to establish boundedness of operators in weighted Hardy spaces (cf. \cite{LL, LLY, DLL}). However in some cases, the {\sl weighted molecular characterization}, which obtained in \cite{LL}, does not give the best possible results. For Calder\'on-Zygmund type operators in Theorem \ref{CZO molecule}, for instance, it  involves assumption on the {\sl critical index of $w$ for the reverse H\"older condition} as the following theorem does not.

\begin{Theorem}\label{not molecule}
Given $\delta\in (0,1]$, $n/(n+\delta)<p\leq 1$, and $w\in A_q$ with $1\leq q<p(n+\delta)/n$. Let  $\theta$ be a nonnegative nondecreasing function on $(0,\infty)$ with  $\int_0^1 \frac{\theta(t)}{t^{1+\delta}}dt<\infty$, and  $T$ be a $\theta$-Calder\'on-Zygmund operator  satisfying $T^*1=0$. Then $T$ is bounded on $H^p_w(\mathbb R^n)$.
\end{Theorem}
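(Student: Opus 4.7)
The plan is to apply Theorem C while bypassing the weighted molecular characterization used in the proof of Theorem \ref{CZO molecule}, by directly constructing an atomic decomposition of $Tf$ for each atom $f$. The hypothesis $w\in A_q$ with $q<p(n+\delta)/n$ gives $q_w\le q<p(n+\delta)/n$; since $p>n/(n+\delta)$ also forces $p(n+\delta)/n>1$, I would fix an exponent $q'$ with $\max(q_w,1)<q'<p(n+\delta)/n$, automatically giving $w\in A_{q'}$. Then $n(q_w/p-1)<\delta\le 1$, so the atom order $s=0$ is admissible, and Theorem C reduces the problem to a uniform bound $\|Tf\|_{H^p_w}\le C$ over all $w$-$(p,q',0)$-atoms $f$.

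For such an atom $f$ supported in $B=B(x_0,\sigma)$, I would first gather the same three ingredients used in the proof of Theorem \ref{CZO molecule}: the mean-zero property $\int Tf=0$ by Lemma \ref{T1}, the bound $\|Tf\|_{L^{q'}_w}\le Cw(B)^{1/q'-1/p}$ from the $A_{q'}$-weighted $L^{q'}_w$-boundedness of $T$ (which requires $q'>1$), and the pointwise off-diagonal estimate
\[ |Tf(x)|\le C\,\frac{\sigma^n}{|x-x_0|^n}\,\theta\!\left(\frac{\sigma}{|x-x_0|}\right)w(B)^{-1/p}\qquad (x\in(2B)^c), \]
coming from the cancellation of $f$ and the $\theta$-smoothness of $K$.

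Next, rather than packaging these into a weighted molecule (the very step that brought the reverse-H\"older index $r_w$ into Theorem \ref{CZO molecule}), I would construct an atomic decomposition of $Tf$ by a telescoping annular procedure. With $B_k=2^{k+1}B$, $A_0=2B$, and $A_k=B_k\setminus B_{k-1}$ for $k\ge 1$, set $N_k=\sum_{j\ge k}\int_{A_j}Tf$, so that $N_0=0$, and let
\[ h_k=(Tf)\chi_{A_k}+\frac{N_{k+1}}{|B_k|}\chi_{B_k}-\frac{N_k}{|B_{k-1}|}\chi_{B_{k-1}}\qquad(k\ge 1), \]
with $h_0=(Tf)\chi_{A_0}+(N_1/|B_0|)\chi_{B_0}$. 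A direct telescoping check gives $Tf=\sum_k h_k$, where each $h_k$ is supported in $B_k$ and satisfies $\int h_k=0$; writing $h_k=\lambda_k a_k$ with $\lambda_k=w(B_k)^{1/p-1/q'}\|h_k\|_{L^{q'}_w}$ turns each $a_k$ into a $w$-$(p,q',0)$-atom.

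The main obstacle, and the point where I would spend the most care, is verifying the uniform summability $\sum_k\lambda_k^p\le C$. The $k=0$ contribution is controlled by the $L^{q'}_w$-boundedness of $T$ together with the doubling of $w$. For $k\ge 1$, combining the pointwise decay of $Tf$ on $A_k$ with the resulting tail bound $|N_k|\lesssim |B|w(B)^{-1/p}S_k$, where $S_k:=\sum_{j\ge k}\theta(2^{-j})$, yields $\|h_k\|_{L^{q'}_w}\lesssim S_k\cdot 2^{-kn}w(B)^{-1/p}w(B_k)^{1/q'}$; the $A_{q'}$-comparison $w(B_k)\le C2^{knq'}w(B)$ then reduces matters to the convergence of $\sum_k S_k^p\,2^{kn(q'-p)}$. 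Since $q'<p(n+\delta)/n$, there is $\eta>0$ with $n(q'-p)+n\eta=p\delta$; on the other hand, monotonicity of $\theta$ combined with $\int_0^1\theta(t)t^{-1-\delta}dt<\infty$ forces $\theta(t)t^{-\delta}\to 0$, whence $S_k\lesssim 2^{-k\delta}$. Multiplying, $\lambda_k^p\lesssim 2^{-kn\eta}$, which sums geometrically and uniformly in $f$. This is precisely how the $r_w$-hypothesis of Theorem \ref{CZO molecule} is dispensed with: it entered there only through the admissible range of the molecular parameter $\varepsilon$, and the direct atomic construction above does not invoke molecules at all. Once the uniform atom estimate is in hand, Theorem C extends $T$ to a bounded operator on $H^p_w(\mathbb R^n)$.
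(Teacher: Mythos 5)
Your proof is correct, but it reaches the conclusion by a genuinely different mechanism than the paper. Both arguments share the same opening moves: reduce via Theorem C to a uniform bound on $w$-$(p,q',0)$-atoms $f$ supported in $B=B(x_0,\sigma)$, invoke Lemma \ref{T1} for $\int_{\mathbb R^n}Tf(x)\,dx=0$, the weighted $L^{q'}_w$-boundedness of $T$, and the pointwise decay $|Tf(x)|\leq C\sigma^n|x-x_0|^{-n}\theta(\sigma/|x-x_0|)w(B)^{-1/p}$ off $2B$. From there the paper estimates the maximal function directly: it splits $Tf*\phi_t(x)$ into three pieces $E_1,E_2,E_3$ according to the location of $y$, deduces $(Tf)^*(x)\leq C(\sigma/|x-x_0|)^{n+\delta}w(B)^{-1/p}$ for $|x-x_0|>4\sigma$, and integrates this against $w$ using Lemma C, the hypothesis $q<p(n+\delta)/n$ entering as $(n+\delta)p>nq$. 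You instead carry out the classical Coifman/Taibleson--Weiss annular decomposition of $Tf$ into $w$-$(p,q',0)$-atoms by hand, with the mean-zero corrections spread over the balls $B_k$; the hypothesis enters as convergence of $\sum_k 2^{k(n(q'-p)-p\delta)}$, using $\theta(t)\leq Ct^\delta$, which indeed follows from monotonicity together with $\int_0^1\theta(t)t^{-1-\delta}\,dt<\infty$. Your route never touches the maximal function and makes it completely transparent why the reverse H\"older index of Theorem \ref{CZO molecule} is not needed here, at the cost of re-deriving by hand a special case of the molecular machinery; the paper's route is shorter once the pointwise bound on $(Tf)^*$ is established, but requires the more delicate three-way splitting of the convolution. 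Two details you should make explicit to close the argument: the identification $\sum_k\lambda_k a_k=Tf$ in $\mathcal S'(\mathbb R^n)$ (the partial sums equal $Tf\chi_{B_K}+N_{K+1}|B_K|^{-1}\chi_{B_K}$ and $|N_{K+1}|\to0$), and the derivation of $\theta(t)\leq Ct^\delta$ that justifies $S_k\leq C2^{-k\delta}$.
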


The following corollary give the boundedness of the classical Calder\'on-Zygmund type operators on weighted Hardy spaces (see [15, Theorem 3]).
\begin{Corollary}\label{classical}
Let $0<\delta\leq 1$ and $T$ be the classical $\delta$-Calder\'on-Zygmund operator, i.e. $\theta(t)=t^\delta$, satisfying $T^*1=0$. If $n/(n+\delta)<p\leq 1$ and $w\in A_q$ with $1\leq q<p(n+\delta)/n$, then $T$ is bounded on $H^p_w(\mathbb R^n)$.
\end{Corollary}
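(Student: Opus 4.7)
The plan is to use the atomic decomposition route (Theorems~A and~C) rather than the weighted molecular characterization (Theorem~B). The molecular route, used to establish Theorem~\ref{CZO molecule}, forces the molecule parameter $\varepsilon$ to exceed $1/(r_w-1)$, so it depends on the critical index $r_w$; since Theorem~\ref{not molecule} assumes only $w\in A_q$ with no $RH_r$ hypothesis, $r_w$ is uncontrolled and the molecular approach will not be available. I will fix $q'\in(q_w,\,p(n+\delta)/n)$, which is non-empty because $q_w\le q<p(n+\delta)/n$; then $q'>1$, $w\in A_{q'}$, and $[n(q_w/p-1)]=0$ (since $n(q_w/p-1)<\delta\le 1$). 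By Theorems~A and~C it is enough to show $\|Tf\|_{H^p_w}\le C$ uniformly for every $w$-$(p,q',0)$-atom $f$ supported in a ball $B=B(x_0,\sigma)$.

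The ingredients are those already extracted in the proof of Theorem~\ref{CZO molecule}: Lemma~\ref{T1} gives $\int_{\mathbb R^n}Tf=0$; the $L^{q'}_w$-boundedness of $T$ (valid because $w\in A_{q'}$) gives $\|Tf\|_{L^{q'}_w}\le Cw(B)^{1/q'-1/p}$; Lemma~A gives $\|f\|_{L^1}\le C|B|w(B)^{-1/p}$; and the size/smoothness of $K$ together with $\int f=0$ produces
$$|Tf(x)|\le C\,\frac{\theta(\sigma/|x-x_0|)}{|x-x_0|^n}\,|B|\,w(B)^{-1/p},\qquad x\in(2B)^c.$$
With $B_k=B(x_0,2^{k+1}\sigma)$, $A_k=B_{k+1}\setminus B_k$ and $m_k=\int_{B_k}Tf=-\int_{B_k^c}Tf$, the change of variable $u=\sigma/|y-x_0|$ combined with $\int_0^1\theta(u)/u^{1+\delta}\,du<\infty$ will give the tail bound $|m_k|\le C|B|w(B)^{-1/p}\,2^{-k\delta}$.

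Next I will decompose $Tf$ into atoms by a classical telescoping device: set
$$b_{-1}=Tf\chi_{2B}-\tfrac{m_0}{|2B|}\chi_{2B},\quad b_k=Tf\chi_{A_k}-\tfrac{m_{k+1}}{|B_{k+1}|}\chi_{B_{k+1}}+\tfrac{m_k}{|B_k|}\chi_{B_k}\quad(k\ge 0).$$
Each $b_k$ is supported in $B_{k+1}$ (with the convention $B_0=2B$), has $\int b_k=0$ by the definition of $m_k$, and the partial sums of $\sum_{k\ge -1}b_k$ converge to $Tf$ in $\mathcal S'$ because $\|(m_k/|B_k|)\chi_{B_k}\|_\infty\to 0$. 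Hence $b_k=\lambda_k a_k$ where $a_k$ is a $w$-$(p,q',0)$-atom and $\lambda_k=\|b_k\|_{L^{q'}_w}\,w(B_{k+1})^{1/p-1/q'}$. Estimating the three pieces of $\|b_k\|_{L^{q'}_w}$ using the pointwise bound on $Tf|_{A_k}$, the bound on $|m_k|$, and the dyadic growth $w(B_{k+1})\le C\,2^{knq'}w(B)$ for $A_{q'}$-weights should yield
$$\lambda_k\le C\bigl[\theta(2^{-k})+2^{-k\delta}\bigr]\,2^{kn(q'-p)/p}.$$

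The convergence of $\sum_k\lambda_k^p$ will then follow from two facts: $\theta(2^{-k})2^{k\delta}$ is bounded (since $\sum\theta(2^{-k})2^{k\delta}<\infty$), and $n(q'-p)<p\delta$ (from $q'<p(n+\delta)/n$); together these give $\lambda_k^p\le C\,2^{k[n(q'-p)-p\delta]}$ with strictly negative exponent, so $\sum_k\lambda_k^p<\infty$ and $\|Tf\|_{H^p_w}\le C$ by the atomic characterization. The hard part will be calibrating the telescoping so that the dyadic growth $w(B_k)\le C\,2^{knq'}w(B)$ is defeated by the combined decay $\theta(2^{-k})+2^{-k\delta}$: this is exactly where both the strict inequality $q<p(n+\delta)/n$ (allowing the choice of $q'$ small enough) and the hypothesis $\int_0^1\theta(t)/t^{1+\delta}\,dt<\infty$ are used in an essential way.
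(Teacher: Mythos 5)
Your proof is correct in substance but follows a genuinely different route from the paper's. The paper disposes of the corollary in one line: since the inequalities $n/(n+\delta)<p$ and $q<p(n+\delta)/n$ are strict, one may pick $\delta'\in(0,\delta)$ close enough to $\delta$; the kernel with $\theta(t)=t^\delta$ then satisfies $\int_0^1\theta(t)t^{-1-\delta'}\,dt<\infty$, and Theorem~\ref{not molecule} applies with $\delta'$ in place of $\delta$ (that theorem being proved by estimating the maximal function $(Tf)^*$ pointwise off $B(x_0,4\sigma)$ and integrating). You instead argue directly, expanding $Tf$, for each $w$-$(p,q',0)$-atom $f$, into a telescoping series of $w$-$(p,q',0)$-atoms over dyadic annuli and summing the coefficients; this avoids both the maximal-function estimates and the molecular machinery, and it makes transparent where the strict inequality $q<p(n+\delta)/n$ enters (the decay $2^{-k\delta p}$ must defeat the $A_{q'}$-doubling growth $2^{knq'}$, i.e.\ $n(q'-p)<p\delta$). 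The telescoping construction, the convergence of the partial sums in $\mathcal S'$, and the coefficient bound $\lambda_k\le C[\theta(2^{-k})+2^{-k\delta}]2^{kn(q'-p)/p}$ all check out. One caveat you should repair: for $\theta(t)=t^\delta$ the hypothesis $\int_0^1\theta(t)t^{-1-\delta}\,dt<\infty$ is \emph{false} (the integral is $\int_0^1 t^{-1}\,dt$), and likewise $\sum_k\theta(2^{-k})2^{k\delta}=\sum_k 1$ diverges --- this failure is exactly why the paper's own proof retreats to $\delta'<\delta$. Your two appeals to these conditions must be replaced by the direct computations $\int_0^a u^{\delta-1}\,du=a^\delta/\delta$ (which yields the tail bound $|m_k|\le C|B|w(B)^{-1/p}2^{-k\delta}$) and $\theta(2^{-k})2^{k\delta}\equiv 1$ (which gives the boundedness you need). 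Since only these consequences are used, the argument survives intact; alternatively you could, like the paper, first replace $\delta$ by a slightly smaller $\delta'$ and then quote the general hypotheses legitimately.
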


\begin{proof}[Proof of Corollary \ref{classical}]
 By taking $\delta'\in (0,\delta)$ which is close enough $\delta$. Then, we apply Theorem 3.1 with $\delta'$ instead of $\delta$.
\end{proof}

\begin{proof}
[Proof of Theorem \ref{not molecule}]
 Without loss of generality we can assume $1<q< p(n+\delta)/n$.  Fix $\phi\in \mathcal S(\mathbb R^n)$ with $\int_{\mathbb R^n}\phi(x)dx\ne 0$. By Theorem C, it is sufficient to show that for every $w$-$(p,q, 0)$-atom $f$ centered at $x_0$ and supported in ball $B=B(x_0,\sigma)$, $\|(Tf)^*\|_{L^p_w}\leq C$. In order to do this, one write
\begin{eqnarray*}
\|(Tf)^*\|_{L^p_w}^p&=&\int_{|x-x_0|\leq 4\sigma}\Big((Tf)^*(x)\Big)^pw(x)dx+ \int_{|x-x_0|> 4\sigma}\Big((Tf)^*(x)\Big)^pw(x)dx\\
&=& L_1+ L_2.
\end{eqnarray*}

By H\"older inequality, $L^q_w$-boundedness of the maximal function and Lemma B, we get
\begin{eqnarray*}
L_1
&\leq&
 \left(\int_{|x-x_0|\leq 4\sigma}\Big((Tf)^*(x)\Big)^q w(x)dx\right)^{p/q} \Big(\int_{|x-x_0|\leq 4\sigma}w(x)dx\Big)^{1-p/q}\\
&\leq&
C \|f\|_{L^q_w}^p w(B(x_0,4\sigma))^{1-p/q}\leq C.
\end{eqnarray*}

To estimate $L_2$, we first estimate $(Tf)^*(x)$ for $|x-x_0|> 4\sigma$. For any $t>0$, since $\int_{\mathbb R^n}Tf(x)dx=0$ by Lemma 3.1, we get
 \begin{eqnarray*}
|Tf*\phi_t(x)|&=& \left|\int_{\mathbb R^n} Tf(y)\frac{1}{t^n}\left(\phi\Big(\frac{x-y}{t}\Big)- \phi\Big(\frac{x-x_0}{t}\Big)\right)dy\right|\\
&\leq&
\frac{1}{t^n}\int_{|y-x_0|< 2\sigma}|Tf(y)|\left|\phi\Big(\frac{x-y}{t}\Big)- \phi\Big(\frac{x-x_0}{t}\Big)\right|dy\\
&&
+ \frac{1}{t^n}\int_{2\sigma\leq |y-x_0|< \frac{|x-x_0|}{2}}\cdots +  \frac{1}{t^n}\int_{|y-x_0|\geq \frac{|x-x_0|}{2}}\cdots\\
&=& E_1(t)+ E_2(t)+ E_3(t).
\end{eqnarray*}

As $|x-x_0|> 4\sigma$, by the mean value theorem, Lemma A and Lemma B, we get
\begin{eqnarray*}
E_1(t)&=& \frac{1}{t^n}\int_{|y-x_0|< 2\sigma}|Tf(y)|\left|\phi\Big(\frac{x-y}{t}\Big)- \phi\Big(\frac{x-x_0}{t}\Big)\right|dy\\
&\leq&
\frac{1}{t^n}\int_{|y-x_0|< 2\sigma}|Tf(y)|\frac{|y-x_0|}{t}\sup\limits_{\lambda\in (0,1)}\left|\nabla\phi\Big(\frac{x-x_0+ \lambda (y-x_0)}{t}\Big)\right|dy\\
&\leq&
C\frac{\sigma}{|x-x_0|^{n+1}}\int_{|y-x_0|< 2\sigma}|Tf(y)|dy\\
&\leq&
C \frac{\sigma}{|x-x_0|^{n+1}}|B(x_0, 2\sigma)|w(B(x_0,2\sigma))^{-1/q}\|Tf\|_{L^q_w}\\
&\leq&
C  \frac{\sigma^{n+1}}{|x-x_0|^{n+1}}w(B)^{-1/q}\|f\|_{L^q_w}\leq C \frac{\sigma^{n+1}}{|x-x_0|^{n+1}}w(B)^{-1/p}.
\end{eqnarray*}

Similarly, we also get
\begin{eqnarray*}
E_2(t)&\leq& \frac{1}{t^n}\int_{2\sigma\leq |y-x_0|< \frac{|x-x_0|}{2}}\left|\int_{\mathbb R^n}f(z)\Big(K(y,z)- K(y,x_0)\Big)dz\right|\frac{|y-x_0|}{t}\\
&&\quad\quad\quad\quad\quad\quad\quad\quad\quad\quad\quad\quad\quad\quad  \times\sup\limits_{\lambda\in (0,1)}\left|\nabla\phi\Big(\frac{x-x_0+ \lambda (y-x_0)}{t}\Big)\right|dy\\
&\leq&
C\frac{1}{|x-x_0|^{n+1}}\int_{2\sigma\leq |y-x_0|< \frac{|x-x_0|}{2}}|y-x_0|\int_{|z-x_0|< \sigma}|f(z)|\frac{1}{|y-x_0|^n}\theta\Big(\frac{|z-x_0|}{|y-x_0|}\Big)dzdy\\
&\leq&
C\Big(\frac{\sigma}{|x-x_0|}\Big)^{n+1}\int_{2\sigma/|x-x_0|}^{1/2}\frac{\theta(t)}{t^2}dt w(B)^{-1/p}\\
&\leq&
C \Big(\frac{\sigma}{|x-x_0|}\Big)^{n+1}\Big(\frac{|x-x_0|}{2\sigma}\Big)^{1-\delta}\int_{2\sigma/|x-x_0|}^{1/2}\frac{\theta(t)}{t^{1+\delta}}dt w(B)^{-1/p}\\
&\leq&
 C \Big(\frac{\sigma}{|x-x_0|}\Big)^{n+\delta}w(B)^{-1/p}.
\end{eqnarray*}

 Next, let us look at $L_3$. Similarly, we also have
\begin{eqnarray*}
E_3(t)&\leq& \frac{1}{t^n}\int_{|y-x_0|\geq \frac{|x-x_0|}{2}}\left|\int_{\mathbb R^n}f(z)\Big(K(y,z)- K(y,x_0)\Big)dz\right| \left(\Big|\phi\Big(\frac{y-x_0}{t}\Big)\Big|+ 2\Big|\phi\Big(\frac{x-x_0}{t}\Big)\Big|\right)dy\\
&\leq&
C\frac{1}{|x-x_0|^n}\int_{|y-x_0|\geq \frac{|x-x_0|}{2}}\int_{|z-x_0|< \sigma}|f(z)|\frac{1}{|y-x_0|^n}\theta\Big(\frac{|z-x_0|}{|y-x_0|}\Big)dzdy\\
&\leq&
C \Big(\frac{\sigma}{|x-x_0|}\Big)^n \int_0^{2\sigma/|x-x_0|}\frac{\theta(t)}{t}dtw(B)^{-1/p}\\
&\leq&
 C\Big(\frac{\sigma}{|x-x_0|}\Big)^n \int_0^{2\sigma/|x-x_0|}\frac{\theta(t)}{t^{1+\delta}}dt\Big(\frac{2\sigma}{|x-x_0|}\Big)^\delta w(B)^{-1/p}\\
&\leq&
 C\Big(\frac{\sigma}{|x-x_0|}\Big)^{n+\delta}w(B)^{-1/p}.
\end{eqnarray*}

Therefore, for all $|x-x_0|> 4\sigma$, 
$$(Tf)^*(x)=\sup\limits_{t>0}( E_1(t)+ E_2(t) + E_3(t))\leq C \Big(\frac{\sigma}{|x-x_0|}\Big)^{n+\delta}w(B)^{-1/p}.$$

Combining this, Lemma C and Lemma B, we obtain that
\begin{eqnarray*}
L_2=\int_{|x-x_0|> 4\sigma}\Big((Tf)^*(x)\Big)^pw(x)dx
&\leq&
 C \int_{|x-x_0|> 4\sigma}\frac{\sigma^{(n+\delta)p}}{|x-x_0|^{(n+\delta)p}}w(B)^{-1}w(x)dx\\
&\leq&
C w(B)^{-1} w(B(x_0, 4\sigma))\leq C,
\end{eqnarray*}
since $(n+\delta)p>nq$. This finishes the proof.
\end{proof}



\begin{thebibliography}{MTW1}

\bibitem{Bo} M. Bownik, Boundedness of operators on Hardy spaces via atomic decompositions. Proc. Amer. Math. Soc. 133 (2005), 3535--3542.

\bibitem{BLYZ} M. Bownik, B. Li, D. Yang, and Y. Zhou, Weighted anisotropic Hardy spaces and their applications in boundedness of sublinear operators. Indiana Univ. Math. J. 57 (2008), no. 7, 3065--3100.

\bibitem{DLL} Y. Ding, M.-Y. Lee and C.-C. Lin, Fractional integrals on weighted Hardy spaces. J. Math. Anal. Appl. 282 (2003), 356--368.

\bibitem{FS} C. Fefferman, E.M. Stein, $H^p$ spaces of several variables. Acta Math. 129 (1972), 137--193.

\bibitem{Ga} J. Garc\'ia-Cuerva, Weighted $H^p$ spaces. Dissertations Math. 162 (1979), 1--63.

\bibitem{GR} J. Garc\'ia-Cuerva and J.L. Rubio de Francia, Weighted norm inequalities and related topics. NorthHolland Math. Stud. 116, 1985.

\bibitem{Jo} J.-L. Journ\'e, Calder\'on-Zygmund operators, pseudo-differential operators and the Cauchy integral of Calder\'on. Lecture notes in Math. 994, Springer-Verlag, Berlin, 1983.




\bibitem{Ky}  L. D. Ky, New Hardy spaces of Musielak-Orlicz type and boundedness of sublinear operators, arXiv:1103.3757.

\bibitem{Le} M.-Y. Lee,  Different approaches to the $H^p$ boundedness of Riesz transforms. Commun. Math. Anal. 1 (2006), no. 2, 101--108.

\bibitem{LL} M.-Y. Lee and C.-C. Lin, The molecular characterization of weighted Hardy spaces. J. Funct. Anal. 188 (2002), 442--460.

\bibitem{LLY} M.-Y. Lee, C.-C. Lin and  W.-C.Yang, $H^p_w$-boundedness of Riesz transforms. J. Math. Anal. Appl. 301 (2005), 394--400.

\bibitem{Lu} S.Z. Lu, Four lectures on real $H^p$ spaces. World Scientific Publishing Co. Pte. Ltd, 1995.


\bibitem{MSV} S. Meda, P. Sj\"ogren and M. Vallarino, On the $H^1$-$L^1$  boundedness of operators. Proc. Amer. Math. Soc. 136 (2008), 2921--2931.

\bibitem{MC} Y. Meyer and R. Coifman, Wavelets, Calder\'{o}n-Zygmund and multilinear operators. Advanced mathematics. Cambridge University Press, 1997.

\bibitem{QY} T. Quek and D. Yang, Calder\'on-Zygmund-type operators on weighted weak Hardy spaces over $\mathbb R^n$, Acta Math. Sin. (Engl. Ser.), 16 (2000), 141--160.


\bibitem {ST}J.-O. Str\"omberg and A. Torchinsky, Weighted Hardy spaces. Lecture Notes in Mathematics, Vol. 1381, Springer-Verlag, Berlin/New York, 1989.

\bibitem{TW} M.H. Taibleson and G. Weiss, The molecular characterization of certain Hardy spaces; Ast\'erisque. 77 (1980), 67--149.

\bibitem{Ya} K. Yabuta, Generalizations of Calder\'on-Zygmund operators, Studia Math. 82 (1985) 17--31.

\bibitem{YZ} D. Yang and Y. Zhou, A boundedness criterion via atoms for linear operators in Hardy spaces. Constr. Approx. 29,
207--218 (2009).




\end{thebibliography}
\end{document}